\definecolor{verylight}{gray}{0.97}
\definecolor{light}{gray}{0.9}
\definecolor{medium}{gray}{0.85}
\definecolor{dark}{gray}{0.6}
\def\frk{\frak}               % font for "Fraktur"
\def\mm{{\frk m}}
\def\Phi{{\frk n}}
\def\Phi{{\frk N}}
\def\xb{{\bold x}}
\def\opn#1#2{\def#1{\operatorname{#2}}} % to make operators
\opn\chara{char} \opn\length{\ell} \opn\pd{pd} \opn\rk{rk}
\opn\projdim{proj\,dim} \opn\injdim{inj\,dim} \opn\rank{rank}
\opn\depth{depth} \opn\grade{grade} \opn\height{height}
\opn\embdim{emb\,dim} \opn\codim{codim}
\opn\Tr{Tr} \opn\bigrank{big\,rank}
\opn\superheight{superheight}\opn\lcm{lcm}
\opn\trdeg{tr\,deg}%\emph{
\opn\reg{reg} \opn\lreg{lreg} \opn\ini{in} \opn\lpd{lpd}
\opn\size{size}\opn\bigsize{bigsize}
\opn\cosize{cosize}\opn\bigcosize{bigcosize}
\opn\sdepth{sdepth}\opn\sreg{sreg}
\opn\link{link}\opn\fdepth{fdepth}
\opn\div{div} \opn\Div{Div} \opn\cl{cl} \opn\Cl{Cl}
\let\epsilon\varepsilon
\let\phi=\varphi
\let\kappa=\varkappa
\opn\Spec{Spec} \opn\Supp{Supp} \opn\supp{supp} \opn\Sing{Sing}
\opn\Ass{Ass} \opn\Min{Min}\opn\Mon{Mon} \opn\dstab{dstab} \opn\astab{astab}
\opn\Syz{Syz}
\opn\Ann{Ann} \opn\Rad{Rad} \opn\Soc{Soc}
\opn\Im{Im} \opn\Ker{Ker} \opn\Coker{Coker} \opn\Am{Am}
\opn\Hom{Hom} \opn\Tor{Tor} \opn\Ext{Ext} \opn\End{End}
\opn\Aut{Aut} \opn\id{id}
\opn\nat{nat}
\opn\pff{pf}%   \pf exists already
\opn\Pf{Pf} \opn\GL{GL} \opn\SL{SL} \opn\mod{mod} \opn\ord{ord}
\opn\Gin{Gin} \opn\Hilb{Hilb}\opn\sort{sort}
\opn\initial{init}
\opn\ende{end}
\opn\height{height}
\opn\type{type}
\opn\aff{aff} \opn\con{conv} \opn\relint{relint} \opn\st{st}
\opn\lk{lk} \opn\cn{cn} \opn\core{core} \opn\vol{vol}
\opn\link{link} \opn\star{star}\opn\lex{lex}
\opn\gr{gr}
\def\pot#1#2{#1[\kern-0.28ex[#2]\kern-0.28ex]}
\opn\dirlim{\underrightarrow{\lim}}
\opn\inivlim{\underleftarrow{\lim}}
\let\union=\cup
\let\sect=\cap
\let\iso=\cong
\let\Sect=\bigcap
\let\to=\rightarrow
\def\Implies{\ifmmode\Longrightarrow \else
        \unskip${}\Longrightarrow{}$\ignorespaces\fi}
\def\implies{\ifmmode\Rightarrow \else
        \unskip${}\Rightarrow{}$\ignorespaces\fi}
\def\iff{\ifmmode\Longleftrightarrow \else
        \unskip${}\Longleftrightarrow{}$\ignorespaces\fi}
 \theoremstyle{plain}
\newtheorem{Theorem}{Theorem}[section]
 \newtheorem{Corollary}[Theorem]{Corollary}
 \newtheorem{Proposition}[Theorem]{Proposition}
 \theoremstyle{definition}
 \newtheorem{Example}[Theorem]{Example}
\let\epsilon\varepsilon
\let\kappa=\varkappa
\def\qed{\ifhmode\textqed\fi
      \ifmmode\ifinner\quad\qedsymbol\else\dispqed\fi\fi}
\def\textqed{\unskip\nobreak\penalty50
       \hskip2em\hbox{}\nobreak\hfil\qedsymbol
       \parfillskip=0pt \finalhyphendemerits=0}
\def\dispqed{\rlap{\qquad\qedsymbol}}
\opn\dis{dis}
\def\pnt{{\raise0.5mm\hbox{\large\bf.}}}
\opn\Lex{Lex}
\begin{document}
\title{Bi-Cohen-Macaulay graphs}
\author {J\"urgen Herzog and Ahad Rahimi}

\address{J\"urgen Herzog, Fachbereich Mathematik, Universit\"at Duisburg-Essen, Fakult\"at f\"ur Mathematik, 45117
Essen, Germany}

\email{juergen.herzog@uni-essen.de}
\address{ Ahad Rahimi, Department of Mathematics, Razi University, Kermanshah,
 Iran}
 \email{ahad.rahimi@razi.ac.ir}

\begin{abstract}
In this paper we consider bi-Cohen-Macaulay graphs, and give a complete classification of such graphs in the case they are bipartite or chordal. General bi-Cohen-Macaulay graphs are classified up to separation. The inseparable bi-Cohen-Macaulay graphs are  determined.  We establish  a bijection between the set of all trees and the set of inseparable bi-Cohen-Macaulay graphs.
\end{abstract}

\subjclass[2010]{ 05E40, 13C14.}
\thanks{This paper was written during the visit of the second author at Universit\"at Duisburg-Essen, Campus Essen. He is grateful for its hospitality. }
\keywords{Bi-Cohen--Macaulay, Bipartite and chordal graphs,  Generic graphs, Inseparability.}

\maketitle

\section*{Introduction}

A simplicial complex $\Delta$ is called {\em bi-Cohen-Macaulay} (bi-CM), if $\Delta$ and its Alexander dual $\Delta^\vee$ are Cohen-Macaulay. This concept was introduced by Fl{\o}ystad and Vatne in \cite{FV}. In that paper the authors associated to each simplicial complex $\Delta$ in a natural way a complex of coherent sheaves and showed that this complex reduces to a coherent sheaf if and only if $\Delta$ is bi-CM.

The present  paper is an attempt to classify all  bi-CM graphs. Given a field $K$ and a simple graph on the vertex set $[n]=\{1,2,\ldots,n\}$,  one associates with $G$ the edge ideal $I_G$ of $G$, whose generators are the monomials $x_ix_j$ with $\{i,j\}$ an edge of $G$. We say that $G$ is bi-CM if the simplicial complex whose Stanley-Reisner ideal coincides with $I_G$ is bi-CM. Actually, this simplicial complex is the so-called {\em independence complex} of $G$. Its   faces are the independent sets of $G$, that is, subsets $D$ of $[n]$ with $\{i,j\}\not\subset D$ for all edges $\{i,j\}$ of $G$.

By its very definition, any bi-CM graph  is also a Cohen-Macaulay graph (CM graph). A complete classification of all CM graphs is hopeless if not impossible. However, such a classification is given for bipartite graphs \cite[Theorem 3.4]{HH1} and for chordal graphs \cite{HHZ}. We refer the reader to the books \cite{HH} and  \cite{V} for a good survey on edge ideals and its algebraic and homological properties.

Based on the classification of bipartite and chordal CM graphs,  we provide in Section~\ref{class} a classification  of bipartite and chordal bi-CM graphs, see Theorem~\ref{bipartite1} and Theorem~\ref{chordal}. In Section~\ref{properties} we first present   various characterizations of bi-CM graphs. By using the Eagon-Reiner theorem \cite{ER}, one notices that the graph $G$ is bi-CM if and only if it is CM and $I_G$ has a linear resolution. Cohen-Macaulay ideals generated in degree 2 with linear resolution are of very special nature. They all arise as deformations of the square of the maximal ideal of a suitable polynomial ring. From this fact arise  constraints on the number of edges of the graph and on the Betti numbers of $I_G$.

Though a complete classification of all bi-CM  graphs seems to be again impossible, a classification of all bi-CM graphs up to separation can be given, and this is the subject of the remaining sections.

A {\em separation} of the graph $G$ with respect to the vertex $i$ is  a graph $G'$ whose vertex set is $[n]\union\{i'\}$ having the property that $G$ is obtained from $G'$ by identifying $i$ with $i'$ and such that $x_i-x_{i'}$ is a non-zerodivisor modulo $I_{G'}$. The algebraic condition on separation makes sure that the essential algebraic and homological invariants of $I_G$ and $I_{G'}$ are the same. In particular, $G$ is bi-CM if and only if $G'$ is bi-CM.  A graph which does not allow any separation is called {\em inseparable}, and a inseparable graph which is obtained by  a finite number of separation steps from $G$ is called a {\em separable model} of $G$. Any graph admits separable models and the number of separable models of a graph is finite. Separable and inseparable graphs from the view point of deformation theory have been studied in \cite{ABHL}.

In Section~\ref{main} we determine all inseparable bi-CM graphs on $[n]$ vertices. Indeed, in Theorem~\ref{maintheorem} it is shown that for any tree $T$ on the vertex set $[n]$ there exists a unique inseparable bi-CM graph $G_T$ determined by $T$, and any inseparable bi-CM graph is of this form. Furthermore, if $G$ is an arbitrary bi-CM graph and $T$ is the relation graph of the Alexander dual of $I_G$,  then $G_T$ is a separable model of $G$.

For a bi-CM graph $G$, the Alexander dual $J=(I_G)^\vee$ of $I_G$ is a Cohen-Macaulay ideal of codimension 2 with linear resolution. As described in \cite{BH1}, one attaches to any relation matrix of $J$ a relation tree $T$. Replacing the entries in this matrix by distinct variables with the same sign, one obtains the  so-called {\em generic relation matrix} whose ideals of $2$-minors $J_T$ and its Alexander has been computed in \cite{N}. This theory is described in Section~\ref{generic}. The Alexander dual of $J_T$ is the edge ideal of  graph, which actually is the graph $G_T$ mentioned before and which serves as a separable model of $G$.

\section{Preliminaries and various characterizations  of Bi-Cohen-Macaulay graphs}
\label{properties}

In this section we recall some of the standard notions of graph theory which are relevant for this paper, introduce the bi-CM graphs and present various equivalent conditions of a graph to be bi-CM.

The graphs considered here will all be finite, simple graphs, that is, they will have no double edges and no loops. Furthermore we assume that $G$ has no isolated vertices. The vertex set of $G$ will be denoted $V(G)$ and will be the set $[n]=\{1,2,\ldots,n\}$, unless otherwise stated. The set of edges of $G$ we denote by $E(G)$.

A subset $F\subset [n]$ is called a {\em clique} of $G$, if $\{i,j\}\in E(G)$ for all $i,j\in F$ with $i\neq j$. The set of all cliques of $G$ is a simplicial complex, denoted $\Delta(G)$.

A subset $C\subset [n]$ is called a {\em vertex cover} of $G$  if $C\sect\{i,j\}\neq \emptyset$ for all edges $\{i,j\}$ of $G$. The graph $G$ is called {\em unmixed} if all minimal vertex covers of $G$ have the same cardinality. This concept has an algebraic counterpart. We fix a field $K$ and consider the ideal $I_G\subset S=K[x_1,\ldots,x_n]$ which is generated by all monomials $x_ix_j$ with $\{i,j\}\in E(G)$. The ideal $I_G$ is called the {\em edge ideal} of $G$.  Let $C\subset [n]$.  Then the monomial prime ideal $P_C=(\{x_i\: i\in C\})$ is a minimal prime ideal of $I_G$ if and only if $C$ is a minimal vertex cover of $G$. Thus $G$ is unmixed if and only if $I_G$ is unmixed in the algebraic sense. A subset $D\subset [n]$ is called an {\em independent set} of $G$ if $D$ contains no set $\{i,j\}$ which is an edge of $G$. Note that $D$ is an independent set of $G$ if and only if $[n]\setminus D$ is a vertex cover. Thus the minimal vertex covers of $G$ correspond to the maximal independent sets of $G$. The cardinality of a maximal independent is called the {\em independence number} of $G$. It follows that the Krull dimension of $S/I_G$ is equal to $c$, where $c$ is the independence number of $G$.

The graph $G$ is called {\em bipartite} if $V(G)$ is  the disjoint union of $V_1$ and $V_2$ such that $V_1$ and $V_2$ are independent sets, and $G$ is called {\em disconnected} if $V(G)$ is the disjoint union of $W_1$ and $W_2$ and there is no edge $\{i,j\}$ of $G$ with $i\in W_1$ and $j\in W_2$. The graph $G$ is called {\em connected} if it is not disconnected.

A {\em cycle} $C$ (of length $r$) in $G$  is a sequence of edges $\{i_k,j_k\}$ with $k=1,2,\ldots,r$ such that $j_k=i_{k+1}$ for $k=1,\ldots,r-1$ and $j_r=i_1$. A cord of $C$ is an edge $\{i,j\}$  of $G$ with $i,j\in\{i_1,\ldots,i_r\}$ and $\{i,j\}$ is not an edge of $C$. The graph $G$ is called {\em chordal} if each cycle of $G$  of length $\geq 4$ has a chord. A graph which has no cycle and which is connected is called a {\em tree}.

\medskip
Now we recall the main concept we are dealing with in this paper. Let $I\subset S$ be a squarefree monomial ideal. Then $I=\Sect_{j=1}^m P_{j}$ where each of the $P_j$  is a monomial prime ideal of $I$.  The ideal $I^\vee$ which is minimally generated by the monomials $u_{j}=\prod_{x_i\in P_j}x_i$ is called the {\em Alexander dual} of $I$. One has $(I^\vee)^ \vee=I$. In the case that $I=I_G$,  each  $P_j$ is generated by the variables  corresponding to a minimal vertex cover of $G$. Therefore, $(I_G)^\vee$ is also called the {\em vertex cover ideal} of $G$.

According to \cite{FV} a squarefree monomial ideal $I\subset S$  is called {\em bi-Cohen-Macaulay} (or simply bi-CM) if  $I$ as well as  the Alexander dual $I^\vee$ of $I$ is a Cohen-Macaulay ideal. A graph $G$ is called {\em Cohen-Macaulay} or {\em bi-Cohen-Macaulay (over $K$)} (CM or bi-CM for short),  if $I_G$ is CM or bi-CM. One important result  regarding  the Alexander dual that will be used frequently in this paper is the Eagon-Reiner theorem which says that $I$ is a Cohen-Macaulay ideal if and only if $I^\vee$ has a linear resolution.
Thus the Eagon-Reiner theorem implies  that $I$ is bi-CM if and only if $I$ is a Cohen-Macaulay ideal with  linear resolution. From this description it follows that a bi-CM graph is connected. Indeed, if this is not the case, then there are induced subgraphs $G_1, G_2\subset G$ such that $V(G)$ is the disjoint union of $V(G_1)$ and $V(G_2)$. It follows that $I_G=I_{G_1}+I_{G_2}$,  and the ideals $I_{G_1}$ and $I_{G_2}$ are ideals in a different set of variables. Therefore, the free resolution of $S/I_G$ is obtained as the tensor product of the resolutions of $S/I_{G_1}$ and $S/I_{G_2}$. This implies that $I_G$ has relations of degree 4, so that $I_G$ does not have a linear resolution.

\medskip
From now on we will always assume that $G$ is connected, without further mentioning it.

\begin{Proposition}
\label{basic}
Let $K$ be an infinite field and  $G$  a graph on the vertex set $[n]$ with independence number $c$. The following conditions are equivalent:
\begin{enumerate}
\item[{\em (a)}] $G$ is a bi-CM graph over $K$;
\item[{\em (b)}] $G$ is a CM graph over $K$, and $S/I_G$ modulo a maximal regular sequence of linear forms is isomorphic to $T/\mm_T^2$ where $T$ is the polynomial ring over $K$ in $n-c$ variables and $\mm_T$ is the graded maximal ideal of $T$.
\end{enumerate}
\end{Proposition}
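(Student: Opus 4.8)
The plan is to reduce everything to the Eagon--Reiner characterization already recorded in the text: $G$ is bi-CM if and only if $G$ is CM and $I_G$ has a linear resolution. Since $I_G$ is generated in degree $2$, ``linear resolution'' here means a $2$-linear resolution. In both (a) and (b) the graph $G$ is CM, and since $\dim S/I_G=c$ we have $\depth S/I_G=c$. Because $K$ is infinite, I can choose a maximal regular sequence $\theta_1,\dots,\theta_c$ of linear forms on $S/I_G$; a short argument shows such a sequence is automatically $K$-linearly independent, for a linear dependence would force some $\theta_j$ to lie in the ideal generated by the others, making it a zerodivisor on the corresponding quotient. Consequently $T:=S/(\theta_1,\dots,\theta_c)$ is a polynomial ring in $n-c$ variables, and $A:=(S/I_G)\otimes_S T=T/J$, with $J$ the image of $I_G$, is Artinian.

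The crucial homological input is that reduction modulo a regular sequence of linear forms preserves graded Betti numbers: tensoring a minimal graded free $S$-resolution of $S/I_G$ with $T$ yields a minimal graded free $T$-resolution of $A$. Hence $\beta_{ij}(S/I_G)=\beta_{ij}(A)$ for all $i,j$; in particular $I_G$ has a $2$-linear resolution over $S$ if and only if $J$ has a $2$-linear resolution over $T$, which is in turn equivalent to $\reg A=1$ together with $J$ being generated in degree $2$.

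For the implication (a)$\Rightarrow$(b) I would argue as follows. If $G$ is bi-CM then $G$ is CM and $I_G$ is $2$-linear, so $A=T/J$ is Artinian with $J$ generated in degree $2$ and $\reg A=1$. Since $A$ is Artinian, $\reg A=\max\{j:A_j\neq 0\}$, so $A_j=0$ for all $j\geq 2$, i.e.\ $\mm_T^2\subseteq J$; as $J$ is generated in degree $2$ we also have $J\subseteq\mm_T^2$, whence $J=\mm_T^2$ and $A\cong T/\mm_T^2$. For (b)$\Rightarrow$(a) I would assume $G$ is CM and $A\cong T/\mm_T^2$. Since $\mm_T^2$, being a power of the graded maximal ideal, has a $2$-linear resolution over $T$, so does $A$; transporting the Betti numbers back to $S$ shows that $I_G$ has a linear resolution, and together with the CM hypothesis this yields that $G$ is bi-CM by the Eagon--Reiner characterization.

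The hard part, or rather the point demanding the most care, will be the passage to the Artinian reduction: one must justify both the existence of a maximal regular sequence of linear forms (this is exactly where the infiniteness of $K$ is used) and the invariance of the graded Betti numbers under such a reduction. Once these standard facts are secured, the whole equivalence collapses to the elementary observation that an Artinian quotient $T/J$ with $J$ generated in degree $2$ has regularity $1$ precisely when $J=\mm_T^2$.
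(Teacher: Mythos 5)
Your proposal is correct and follows essentially the same route as the paper: both pass to the Artinian reduction $T/J$ via a maximal regular sequence of linear forms (using that $K$ is infinite and $S/I_G$ is Cohen--Macaulay), note that linearity of the resolution is preserved under this reduction, and conclude $J=\mm_T^2$. The only difference is cosmetic: where the paper simply cites the fact that the only $\mm_T$-primary ideals with linear resolution are powers of $\mm_T$, you prove the needed degree-$2$ instance directly via $\reg A=1$ and the Artinian property, which makes your write-up slightly more self-contained but not a different argument.
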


\begin{proof} We only need to show that $I_G$ has a linear resolution if and only if condition (b) holds. Since $K$ is infinite and since $S/I_G$ is Cohen-Macaulay of dimension $c$, there exists  a  regular sequence $\xb$ of linear forms on $S/I_G$ of length $c$.  Let $T=S/(\xb)$. Then $T$ is isomorphic to  a polynomial ring in $n-c$ variables. Let $J$ be the image of $I_G$ in $T$. Then $J$ is generated  in degree $2$ and has a linear resolution if and only if $I_G$ has linear resolution. Moreover, $J$ is $\mm_T$-primary.  The only $\mm_T$-primary ideals with linear resolution are the powers of $\mm_T$. Thus, $I_G$ has a linear resolution if and only if $J=\mm_T^2$.
\end{proof}

\begin{Corollary}
\label{edges}
Let $G$ be  a graph on the vertex set $[n]$ with independence number $c$. The following conditions are equivalent:
\begin{enumerate}
\item[{\em (a)}] $G$ is a bi-CM graph over $K$;
\item[{\em (b)}] $G$ is a CM graph over $K$  and $|E(G)|={n-c+1\choose  2}$;
\item[{\em (c)}] $G$ is a CM graph over $K$ and  the number of minimal vertex covers of $G$ is equal to $n-c+1$;
\item[{\em (d)}] $\beta_i(I_G)=(i+1){n-c+1\choose i+2}$ for $i=0,\ldots, n-c-1$.
\end{enumerate}
\end{Corollary}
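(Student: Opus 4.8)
The plan is to funnel all four conditions through Proposition~\ref{basic}, whose content is that, for $G$ Cohen--Macaulay, being bi-CM is equivalent to the Artinian reduction of $S/I_G$ being $T/\mm_T^2$. Before invoking it I would remove the standing hypothesis that $K$ is infinite: replacing $K$ by an infinite extension $K'$ of the same characteristic leaves each of (a)--(d) intact. Indeed, $\depth$ and $\dim$, hence Cohen--Macaulayness of $I_G$, are preserved under the faithfully flat map $S\to S\otimes_K K'$; the graded Betti numbers $\beta_i(I_G)$ are preserved because a minimal $S$-free resolution stays minimal after $-\otimes_K K'$; and $|E(G)|$ and the number of minimal vertex covers are purely combinatorial. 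So I may assume $K$ infinite. Write $d=n-c$. Whenever $G$ is CM, choose a maximal regular sequence $\xb$ of linear forms on $S/I_G$, set $T=S/(\xb)$ (a polynomial ring in $d$ variables) and let $J\subseteq T$ be the image of $I_G$. Reduction modulo $\xb$ preserves graded Betti numbers, so $\beta_i(I_G)=\beta_i(J)$ for all $i$; moreover $J$ is generated in degree $2$ with $\mu(J)=|E(G)|$, and $J\subseteq\mm_T^2$.

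The single mechanism driving the equivalences is that, as $J$ and $\mm_T^2$ are both generated in degree $2$ and $J\subseteq\mm_T^2$, one has $J_2\subseteq(\mm_T^2)_2=T_2$, and hence $J=\mm_T^2$ as soon as $\dim_K J_2=\binom{d+1}{2}=\dim_K T_2$. Since $\mu(J)=\dim_K J_2$ (as $J$ is generated in degree $2$), this reads $J=\mm_T^2\iff\mu(I_G)=\binom{d+1}{2}$. Combined with Proposition~\ref{basic} this already yields (a)$\iff$(b): if $G$ is bi-CM then $J=\mm_T^2$ forces $|E(G)|=\mu(J)=\binom{n-c+1}{2}$, and if $G$ is CM with $|E(G)|=\binom{n-c+1}{2}$ then $\mu(J)=\binom{d+1}{2}$ gives $J=\mm_T^2$, i.e.\ $G$ is bi-CM.

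For (a)$\iff$(d) I would use that $\mm_T^2$ has a linear resolution with $\beta_i(\mm_T^2)=(i+1)\binom{d+1}{i+2}$ (the standard resolution of the square of the maximal ideal, obtainable from the Eagon--Northcott complex or directly from the Hilbert series of a linear resolution). Thus bi-CM gives $\beta_i(I_G)=\beta_i(J)=\beta_i(\mm_T^2)$, which is (d). Conversely, reading (d) as the complete Betti table, the top nonvanishing value $\beta_{n-c-1}(I_G)=n-c\neq 0$ shows $\pd(S/I_G)=n-c$, so Auslander--Buchsbaum gives $\depth(S/I_G)=c=\dim(S/I_G)$ and $G$ is CM; the term $i=0$ of (d) is $|E(G)|=\binom{n-c+1}{2}$, and (b)$\Rightarrow$(a) concludes. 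For (a)$\iff$(c), both sides assume $G$ CM. The number of minimal vertex covers equals the number of minimal primes of $I_G$, i.e.\ the number of facets of the independence complex; as $S/I_G$ is reduced and (being CM) unmixed, this equals the multiplicity $e(S/I_G)=\dim_K(T/J)$. Since $J$ carries no linear forms, $(T/J)_0\oplus(T/J)_1$ already contributes $1+d$, so $\dim_K(T/J)=n-c+1$ iff $(T/J)_{\ge 2}=0$ iff $J=\mm_T^2$ iff $G$ is bi-CM.

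The routine computations (the Betti numbers of $\mm_T^2$, the degree-$2$ dimension count) are not where the difficulty lies. The two points demanding care are, first, the reduction to an infinite field so that Proposition~\ref{basic} is applicable --- one must be sure that bi-CMness, CMness and every graded Betti number are stable under extension of the base field within a fixed characteristic; and second, the correct reading of condition (d) as prescribing the \emph{entire} Betti sequence, since it is precisely the vanishing of $\beta_i(I_G)$ for $i\ge n-c$ that, through Auslander--Buchsbaum, recovers the Cohen--Macaulay property that (d) does not state outright. For (c) the subtle step is the identification of the number of minimal vertex covers with $\dim_K(T/J)$, which rests on the multiplicity of a CM squarefree quotient equalling its number of top-dimensional minimal primes.
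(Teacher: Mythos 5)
Your proposal is correct and follows essentially the same route as the paper's proof: reduce to an infinite field so that Proposition~\ref{basic} applies, pass to the Artinian reduction $J\subseteq T$, characterize $J=\mm_T^2$ by the generator count for (b), by $\length(T/J)=n-c+1$ via multiplicity for (c), and by the Eagon--Northcott Betti numbers of $\mm_T^2$ together with Auslander--Buchsbaum for (d). The only differences are expository: you spell out the base-change justification and the degree-$2$ dimension count that the paper leaves implicit, and you prove (a)$\iff$(c) directly where the paper proves (b)$\iff$(c), using the identical length argument.
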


\begin{proof} For the proof of the equivalent conditions we may assume that $K$ is infinite and hence we may use Proposition~\ref{basic}.

(a)\iff (b): With the notation of Proposition~\ref{basic} we have $J=\mm_T^2$ if and only if the number of generators of $J$ is equal to ${n-c+1\choose  2}$. Since $I_G$ and $J$ have the same number of generators and since the number of generators of $I_G$ is equal to $|E(G)|$, the assertion follows.

(b)\iff (c): Since $S/I_G$ is Cohen-Macaulay, the multiplicity of $S/I_G$ is equal  to  the length $\length(T/J)$ of $T/J$. On the other hand, the multiplicity is also the number of minimal prime ideals of $I_G$ which coincides with the number of minimal vertex covers of $G$. Thus the length of $T/J$ is equal to the number of minimal vertex covers of $G$. Since  $J=\mm_T^2$ if and only if $\length(T/J)=n-c+1$, the assertion follows.

(a)\implies (d): Note that  $\beta_i(I_G)=\beta_i(J)$ for all $i$. Since $J$ is isomorphic to the ideal of $2$-minors of the matrix
\[
\begin{pmatrix}
 y_{1} & y_2 & \ldots   & y_{n-c}   & 0 \\
  0    & y_1 & \ldots   & y_{n-c-1} & y_{n-c}
\end{pmatrix}
\]
in the variables $y_1,\ldots,y_{n-c}$, the Eagon-Northcott complex (\cite{BV}, \cite{E}) provides a free resolution of $J$ and the desired result follows.

(d)\implies (a): It follows from the description of the Betti numbers of $I_G$ that $\projdim S/I_G=n-c$. Thus, $\depth S/I_G=c$.  Since $\dim S/I_G=c$, it follows that $I_G$ is a Cohen-Macaulay ideal. Since $|E(G)|=\beta_0(I_G)={n-c+1\choose 2}$, condition (b) is satisfied, and hence $G$ is bi-CM, as desired.
\end{proof}

Finally we note that $G$ is a bi-CM graph over $K$ if and only if the vertex cover ideal of $G$ is a codimension $2$ Cohen-Macaulay ideal with linear relations. Indeed,  let $J_G$ be the vertex cover ideal of $G$. Since $J_G=(I_G)^\vee$, it follows from the Eagon-Reiner theorem $J_G$ is bi-CM if and only if $I_G$ is bi-CM.

\section{The classification of  bipartite and chordal bi-CM graphs}
\label{class}

In this section we give a full classification of  the bipartite and chordal bi-CM graphs.

\begin{Theorem}
\label{bipartite1}
Let $G$ be a bipartite graph on the vertex set $V$ with bipartition $V=V_1\union V_2$ where  $V_1=\{v_1,\ldots,v_n\}$ and $V_2=\{w_1,\ldots,w_m\}$. Then the following conditions are equivalent:
\begin{enumerate}
\item[{\em (a)}] $G$ is a bi-CM graph;
\item[{\em (b)}] $n=m$ and $E(G)=\{\{v_i,w_j\}\: 1\leq i\leq j\leq n\}$.
\end{enumerate}
\end{Theorem}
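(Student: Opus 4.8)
The plan is to combine the known classification of Cohen--Macaulay bipartite graphs with the numerical criterion of Corollary~\ref{edges}. Since a bi-CM graph is in particular CM, I would first invoke \cite[Theorem 3.4]{HH1}: a bipartite graph $G$ is CM if and only if $n=m$ and, after relabeling the vertices, one has $\{v_i,w_i\}\in E(G)$ for all $i$, $\{v_i,w_j\}\in E(G)$ implies $i\le j$, and $\{v_i,w_j\},\{v_j,w_k\}\in E(G)$ imply $\{v_i,w_k\}\in E(G)$. The first two conditions say that the relation $i\preceq j:\iff\{v_i,w_j\}\in E(G)$ refines the natural order on $[n]$, while the third says it is transitive; thus a CM bipartite graph is encoded by a partial order $P$ on $[n]$ compatible with $\le$, whose comparable pairs are exactly the edges of $G$.

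Next I would pin down the independence number. The diagonal edges $\{v_i,w_i\}$ form a perfect matching covering all $2n$ vertices, so any independent set meets each of them in at most one vertex and hence has cardinality at most $n$; since $V_1$ is itself independent of size $n$, the independence number is $c=n$. Feeding this into Corollary~\ref{edges} (applied with $2n$ vertices and $c=n$), $G$ is bi-CM if and only if it is CM and $|E(G)|=\binom{2n-n+1}{2}=\binom{n+1}{2}$.

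The heart of the argument is then an elementary counting statement about $P$: the number of edges of $G$ equals the number of comparable pairs $i\preceq j$ of $P$. Because $\preceq$ refines $\le$, every such pair satisfies $i\le j$, so $|E(G)|\le\binom{n}{2}+n=\binom{n+1}{2}$, with equality precisely when every pair $i<j$ is comparable in $P$, i.e.\ when $P$ is a chain. For the implication (a)$\Rightarrow$(b): being bi-CM forces $|E(G)|=\binom{n+1}{2}$, hence $P$ is the total order and $E(G)=\{\{v_i,w_j\}:1\le i\le j\le n\}$. For (b)$\Rightarrow$(a): the triangular graph is exactly the CM graph attached to the chain, so it is CM by the classification, and its edge count is $\binom{n+1}{2}$, whence Corollary~\ref{edges} yields that it is bi-CM.

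I expect the only delicate points to be bookkeeping rather than conceptual: matching the ambient vertex count $2n$ of the theorem against the ``$n$'' appearing in Corollary~\ref{edges}, and verifying that the comparability count of $P$ is genuinely the edge count, which is immediate once the diagonal edges are counted among the reflexive relations. Everything else reduces to the fact that a partial order on $[n]$ refining the natural order has at most $\binom{n+1}{2}$ comparable pairs, with equality only for the chain.
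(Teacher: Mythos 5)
Your proof is correct, but after the common first step it takes a genuinely different route from the paper's. Both arguments begin by invoking the classification of CM bipartite graphs (\cite[Theorem 3.4]{HH1}, equivalently \cite[Theorem 9.1.13]{HH}) to write $G=G(P)$ for a poset $P$ on $[n]$, and both must then show that bi-CM-ness forces $P$ to be a chain. The paper does this through Alexander duality: $I_G=H_P^\vee$ with $H_P=\bigcap_{p_i\leq p_j}(x_i,y_j)$ the vertex cover ideal, so bi-CM-ness makes $H_P$ Cohen--Macaulay of codimension $2$, hence $\projdim H_P=1$, and then \cite[Corollary 2.2]{HH1} --- the projective dimension of $H_P$ equals the Sperner number of $P$ --- forces every antichain to be a singleton. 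You avoid the dual entirely: you compute the independence number $c=n$ from the perfect matching of diagonal edges, feed this into the paper's own Corollary~\ref{edges} to reduce bi-CM-ness of a CM graph to the count $|E(G)|=\binom{n+1}{2}$, and then observe that the comparabilities of a partial order refining the natural order on $[n]$ number at most $\binom{n}{2}+n$, with equality exactly for the chain (and a chain refining a total order is that total order, which gives the exact edge set in (b)). Your route is more self-contained --- it leans on Corollary~\ref{edges}, proved in this paper, rather than on the external Sperner-number result --- at the cost of some extremal bookkeeping; the paper's route is shorter given the cited result and keeps the vertex cover ideal $H_P$ in view, consistent with the role Alexander duality plays in the later sections. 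The two delicate points you flag, namely that Corollary~\ref{edges} must be applied with ambient vertex count $2n$ (so the formula reads $\binom{2n-c+1}{2}=\binom{n+1}{2}$) and that the $n$ reflexive pairs of $P$ account for the diagonal edges in the comparability count, are exactly the right ones, and both check out; your converse direction, using that $G(P)$ is CM for the chain together with the edge count and Corollary~\ref{edges}(b)$\Rightarrow$(a), is also sound.
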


\begin{proof}
(a)\implies (b): Since $G$ is a bi-CM graph, it is in particular a CM-graph, and so  $n=m$, and by \cite[Theorem 9.1.13]{HH} there exists a poset $P = \{p_1,\dots, p_n\}$ such that $G=G(P)$. Here  $G(P)$ is the bipartite graph on $V=\{v_1,\ldots,v_n, w_1,\ldots,w_n\}$ whose edges are those $2$-element subset $\{v_i, w_j\}$ of $V$ such that $p_i\leq p_j$. Thus $I_G=I_{G(P)}= H_P^\vee$,  where
\[
H_P=\bigcap_{p_i\leq p_j}(x_i, y_j)
\]
is an ideal of $S=K[\{x_i,y_i\}_{p_i\in P}]$,  the polynomial ring in $2n$ variables over $K$.
Since $G$ is bi-CM, it follows that $H_P$ is Cohen--Macaulay, and hence
\[
\projdim S/H_P=2n-\depth S/H_P=2n-\dim S/H_P=\height H_P=2.
\]
Thus $\projdim H_P=1$,  and hence, by \cite[Corollary 2.2]{HH1}, the Sperner number of $P$, i.e., the maximum of the cardinalities of antichains of $P$ equals $1$. This implies that $P$ is a chain, and this yields (b).

(b)\implies (a): The graph $G$ described in (b) is of the form $G=G(P)$ where $P$ is a chain. By what is said in (a)\implies (b), it follows that $G$ is bi-CM.
\end{proof}

The following picture shows a bi-CM bipartite graph for $n=4$.

\begin{figure}[hbt]
\begin{center}
\psset{unit=0.8cm}
\begin{pspicture}(-0.2,-2)(2,2)
\rput(-0.8,-1){

\rput(0,0){$\bullet$}
\rput(0,1.5){$\bullet$}
\rput(1.5,0){$\bullet$}
\rput(1.5,1.5){$\bullet$}
\rput(3, 0){$\bullet$}
\rput(3, 1.5){$\bullet$}
\rput(4.5, 0){$\bullet$}
\rput(4.5, 1.5){$\bullet$}

\psline(0,.0)(0, 1.5)
\psline(1.5,.0)(1.5,1.5)
\psline(3, .0)(3, 1.5)
\psline(4.5, .0)(4.5,1.5)

\psline(0.0, 1.5)(1.5,.0)
\psline(0.0, 1.55)(3,.0)
\psline(0.03, 1.5)(4.5,.0)

\psline(1.5, 1.5)(3,.0)
\psline(1.5, 1.5)(4.5,.0)

\psline(3,1.5)(4.5,.0)

\rput(0,2){$x_{1}$}
\rput(1.5,2){$x_{2}$}
\rput(3, 2){$x_{3}$}
\rput(4.5,2){$x_{4}$}

\rput(0,-.5){$y_{1}$}
\rput(1.5,-.5){$y_{2}$}
\rput(3, -.5){$y_{3}$}
\rput(4.5,-.5){$y_{4}$}

}
\end{pspicture}
\end{center}
\caption{A bi-CM bipartite graph.}\label{bipartite}
\end{figure}
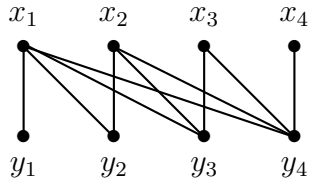

\begin{Theorem}
\label{chordal}
Let $G$ be a chordal graph on the vertex set $[n]$. The following conditions are equivalent:
\begin{enumerate}
\item[{\em (a)}] $G$ is a bi-CM graph;
\item[{\em (b)}] Let $F_1,\ldots, F_m$ be the facets of the clique complex of $G$. Then $m=1$, \\ or $m>1$ and
\begin{enumerate}
\item[{\em (i)}] $V(G)=V(F_1)\union V(F_2)\union \ldots \union V(F_m)$, and this union is disjoint;
\item[{\em (ii)}] each $F_i$ has exactly one free vertex $j_i$;
\item[{\em (iii)}] the restriction of $G$ to $[n]\setminus\{j_1,\ldots,j_m\}$ is a clique.
\end{enumerate}
\end{enumerate}
\end{Theorem}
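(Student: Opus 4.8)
The plan is to reduce everything to two facts already at hand: that $G$ is bi-CM if and only if it is CM and $I_G$ has a linear resolution, and the edge-count criterion of Corollary~\ref{edges}, namely that a CM graph is bi-CM precisely when $\abs{E(G)}=\binom{n-c+1}{2}$. For the Cohen--Macaulay part I would invoke the classification of chordal CM graphs from \cite{HHZ}: a chordal graph is CM if and only if the facets $F_1,\dots,F_m$ of $\Delta(G)$ carrying a free vertex partition $[n]$. This is exactly condition~(i), and it fixes the meaning of the $F_i$ in both directions. The complete-graph case $m=1$ (where $V(F_1)=[n]$, i.e.\ $G=K_n$) is immediate and I would treat it separately: there $c=1$ and $\abs{E(G)}=\binom{n}{2}$, so $G$ is bi-CM.

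So assume $m>1$ and that (i) holds. First I would record that the independence number is $c=m$: since the $F_i$ are cliques partitioning $[n]$, any independent set meets each $F_i$ in at most one vertex, giving $c\le m$, while choosing one free vertex from each $F_i$ yields an independent set of size $m$ (a free vertex is adjacent only to vertices of its own facet, so free vertices of distinct facets are non-adjacent). Writing $d_i=\abs{V(F_i)}$, so that $\sum_i d_i=n$, the proof rests on the elementary identity
\[
\binom{n-m+1}{2}=\sum_{i=1}^m\binom{d_i}{2}+\sum_{i<j}(d_i-1)(d_j-1).
\]
On the other hand, every edge of $G$ lies inside a single $F_i$ or joins two distinct facets, and since a free vertex has all its neighbours inside its own facet, the cross edges involve only non-free vertices. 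Hence if $f_i\ge 1$ denotes the number of free vertices of $F_i$, then
\[
\abs{E(G)}=\sum_{i=1}^m\binom{d_i}{2}+C,\qquad C\le\sum_{i<j}(d_i-f_i)(d_j-f_j)\le\sum_{i<j}(d_i-1)(d_j-1),
\]
where $C$ counts the edges between distinct facets. Combining the two displays and using $c=m$ gives $\abs{E(G)}\le\binom{n-c+1}{2}$ for every chordal CM graph, with equality exactly when $f_i=1$ for all $i$ and every pair of non-free vertices lying in different facets is an edge.

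The two implications then follow at once. If $G$ is bi-CM, Corollary~\ref{edges} forces $\abs{E(G)}=\binom{n-c+1}{2}$, so both inequalities are equalities: $f_i=1$ for every $i$, which is (ii), and, combined with the fact that the non-free vertices of each $F_i$ already form a clique, all non-free vertices of $G$ are pairwise adjacent, which is (iii). Conversely, given (i)--(iii), condition (ii) ensures each $F_i$ carries a free vertex, so \cite{HHZ} yields that $G$ is CM, while (ii)--(iii) make the inequalities equalities, whence $\abs{E(G)}=\binom{n-c+1}{2}$ and $G$ is bi-CM by Corollary~\ref{edges}. I expect the only delicate point to be the equality analysis: one must verify that the single bound $\abs{E(G)}\le\binom{n-c+1}{2}$ simultaneously encodes (ii) and (iii), and that free vertices genuinely contribute no edges across facets, so that the slack $\sum_{i<j}(d_i-1)(d_j-1)-C$ vanishes term by term precisely under (ii) and (iii).
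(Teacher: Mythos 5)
Your argument is correct, but it follows a genuinely different route from the paper's. The paper, after quoting the classification of \cite{HHZ} for condition (i), reduces $S/I_G$ modulo a regular sequence --- following the proof of \cite[Corollary 2.1]{HHZ} --- to the Artinian ideal $J=(P_1^2,\ldots,P_m^2,I_{G'})\subset T$ and then checks, via Proposition~\ref{basic}, that $J=\mm_T^2$ if and only if (ii) and (iii) hold, the obstruction being exhibited directly as a quadratic monomial missing from $J$. You instead stay entirely at the level of the graph: you use only the statement of the \cite{HHZ} classification plus the numerical criterion of Corollary~\ref{edges}, observe that $c=m$, and double-count edges against the identity $\binom{n-m+1}{2}=\sum_{i=1}^m\binom{d_i}{2}+\sum_{i<j}(d_i-1)(d_j-1)$. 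Your route is more elementary (it treats \cite{HHZ} as a black box rather than re-entering its proof) and yields the pleasant by-product that every chordal CM graph satisfies $|E(G)|\le\binom{n-c+1}{2}$, so that the bi-CM ones are exactly those of maximal edge count; the paper's route is shorter once the reduction is quoted. The equality analysis, which you rightly flag as the delicate point, does go through, and you should spell it out: since $G$ has no isolated vertices, $d_j\ge 2$ for every $j$, so if some $f_i\ge 2$ then for each $j\ne i$ one has $(d_i-f_i)(d_j-f_j)\le(d_i-2)(d_j-1)<(d_i-1)(d_j-1)$, and since $m\ge 2$ such a pair exists; hence equality forces $f_i=1$ for all $i$, which is (ii), and then equality in $C\le\sum_{i<j}(d_i-f_i)(d_j-f_j)$ says precisely that any two non-free vertices in distinct facets are adjacent, which together with adjacency inside each facet gives (iii). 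Two further small points: in the case $m=1$ you still need to know that $K_n$ is CM before Corollary~\ref{edges} applies (the paper gets this from the Cohen--Macaulayness of $I_{n,2}=I_G$); and note that you read condition (i) --- exactly as the paper's own proof implicitly does --- as referring to the facets possessing a free vertex in the sense of \cite{HHZ}, since taken literally over all facets condition (b) would be unsatisfiable for a connected graph with $m>1$.
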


\begin{proof} Let $I_{n,d}$ be the ideal generated by all squarefree monomials of degree $d$ in $S=K[x_1,\ldots,x_n]$. It is known (and easy to prove) that $I_{n,d}^\vee=I_{n,n-d+1}$, and that all these ideals are Cohen-Macaulay, and hence all bi-CM. If  $m=1$, then $I_G=I_{n,2}$ and the result follows.

Now let $m>1$.  A bi-CM graph is a CM graph. The CM  chordal graphs have been classified in \cite{HHZ}: they are the chordal graphs satisfying (b)(i). Thus for the proof of the theorem we may assume that (b)(i) holds and simply have to show that (b)(ii) and (b)(iii) are satisfied if and only if $I_G$ has a linear resolution.

Let $P_i$ be the monomial prime ideal generated by the variables $x_k$ with  $k\in V(F_i)\setminus \{j_i\}$, and let $G'$ be subgraph of $G$ whose edges do not belong to any $F_i$. It is shown in the proof of \cite[Corollary 2.1]{HHZ} that there exists a regular sequence on $S/I_G$ such that after reduction modulo this sequence one obtains the ideal $J\subset T$ where $T$ is the polynomial ring on the variables $x_k$ with $k\neq j_i$ for $i=1,\ldots,m$ and where
\begin{eqnarray}
\label{reduction}
J=(P_1^2,\ldots, P_m^2, I_{G'}).
\end{eqnarray}
By  Proposition~\ref{basic},  it follows that $I_G$ has a linear resolution if and only if $J=\mm_T^2$, where $\mm_T$ denotes the graded maximal ideal of $T$.

So, now suppose first that $I_G$ has a linear resolution, and hence $J=\mm_T^2$. Suppose that some  $F_i$ has more than one free vertex, say $F_i$ has the vertex $k$ with $k\neq j_i$. Choose any $F_t$ different from $F_i$ and let $l\in F_j$ with $l\neq j_t$. Then $x_k$ and $x_l$ belong to $T$ but $x_kx_l\not\in J$ as can be seen from (\ref{reduction}). This is a contradiction. Thus (b)(ii) follows.

Suppose next that the graph $G''$ which is the restriction of $G$ to $[n]\setminus\{j_1,\ldots,j_m\}$ is not a clique.  Then there exist $i,j\in V(G'')$ such that $\{i,j\}\not\in E(G'')$. However, since all $x_k$ with $k\in V(G'')$ belong to $T$ and since $J=\mm_T^2$, it follows $x_ix_j\in J$. Thus, by (\ref{reduction}),  $x_ix_j \in P_k^2$ for some $k$ or $x_ix_j\in I_{G'}$. Since (b)(ii) holds, this implies  in both cases that $\{i,j\}\in E(G'')$, a contradiction.
Thus (b)(iii) follows.

Conversely, suppose (b)(ii) and (b)(iii) hold. We want to show that $J=\mm_T^2$. Let $x_i,x_j\in T$. We have  to show that $x_ix_j\in J$. It follows from the description of $J$ that $x_k^2\in J$ for all $x_k\in T$. Thus we may assume that $i\neq j$. If $\{i,j\}$ is not an edge of any $F_k$, then by definition it is an edge of $G'$, and hence $x_ix_j\in I_{G'}\subset J$. On the other hand, if $ \{i,j\}$ is an edge of  $F_k$  for some $k$, then $i,j\neq i_k$, and hence $x_ix_j\in P_k^2\subset J$. Thus the  desired conclusion follows.
\end{proof}

Let $G$ be a chordal bi-CM graph as in Theorem~\ref{chordal}(b) with $m>1$.  We call the complete graph $G''$ which is the restriction of $G$ to $[n]\setminus\{j_1,\ldots,j_m\}$ the {\em center} of $G$.

The following picture shows, up to isomorphism,  all bi-CM chordal graphs whose center is the complete graph $K_4$ on $4$ vertices:

\begin{figure}[hbt]
\begin{center}
\psset{unit=0.7cm}
\begin{pspicture}(0, -1)(2,1)
\rput(0.3,-1){
\rput(0,0){$\bullet$}
\rput(1.5,0){$\bullet$}
\rput(1.5,1.5){$\bullet$}
\rput(3, 1.5){$\bullet$}
\rput(3, 0){$\bullet$}
\rput(-1.5, 1.5){$\bullet$}
\rput(0, 1.5){$\bullet$}
\rput(-1.5, 0){$\bullet$}

\psline(0,0)(0,1.5)
\psline(0,0)(1.5,0)
\psline(1.5,0)(1.5,1.5)
\psline(0, 1.5)(1.5,1.5)

\psline(0, 0)(1.5,1.5)
\psline(0, 1.5)(1.5,0)

\psline(1.5, 1.5)(3,1.5)
\psline(1.5, 0)(3,0)

\psline(0,0)(-1.5,0)
\psline(0,1.5)(-1.5,1.5)
}
\end{pspicture}
\psset{unit=0.7cm}
\begin{pspicture}(-4.5, -1)(2,1)
\rput(0.5,-1){
\rput(0,0){$\bullet$}
\rput(0,1.5){$\bullet$}
\rput(1.5,0){$\bullet$}
\rput(1.5,1.5){$\bullet$}
\rput(3, 1.5){$\bullet$}
\rput(3, 0){$\bullet$}
\rput(-1.5, .75){$\bullet$}

\psline(0,0)(0,1.5)
\psline(0,0)(1.5,0)
\psline(1.5,0)(1.5,1.5)
\psline(0, 1.5)(1.5,1.5)

\psline(0, 0)(1.5,1.5)
\psline(0, 1.5)(1.5,0)

\psline(1.5, 1.5)(3,1.5)
\psline(1.5, 0)(3,0)

\psline(0,0)(-1.5,.75)
\psline(0,1.5)(-1.5,.75)
}
\end{pspicture}
%\end{center}
%\caption{aaa}\label{triangle}
%\end{figure}

%\begin{figure}[hbt]
%\begin{center}
\psset{unit=0.7cm}
\begin{pspicture}(-.5,-1)(3,2)
\rput(-.3,-1){
\rput(0,0){$\bullet$}
\rput(0,1.5){$\bullet$}
\rput(1.5,0){$\bullet$}
\rput(1.5,1.5){$\bullet$}
%\rput(3, 1.5){$\bullet$}
\rput(3, .75){$\bullet$}
\rput(-1.5, .75){$\bullet$}

\psline(0,0)(0,1.5)
\psline(0,0)(1.5,0)
\psline(1.5,0)(1.5,1.5)
\psline(0, 1.5)(1.5,1.5)

\psline(0, 0)(1.5,1.5)
\psline(0, 1.5)(1.5,0)

\psline(1.5, 1.5)(3,.75)
\psline(1.5, 0)(3,.75)

\psline(0,0)(-1.5,.75)
\psline(0,1.5)(-1.5,.75)
}
\end{pspicture}
\psset{unit=0.7cm}
\begin{pspicture}(-2,-1)(2,1)
\rput(-1,-0.27){
\rput(0,0){$\bullet$}
\rput(1.5,0){$\bullet$}
\rput(2.56,1.05){$\bullet$}
\rput(2.56,-1.05){$\bullet$}
\rput(3.62, 0){$\bullet$}
\rput(5.12,0){$\bullet$}

\psline(0,0)(1.5,0)
\psline(1.5,0)(3.62,0)
\psline(1.5,0)(2.56,1.05)
\psline(1.5, 0)(2.56,-1.05)
\psline(2.56, 1.05)(2.56,-1.05)
\psline(2.56, 1.05)(3.62,0)
\psline(2.56, -1.05)(3.62,0)
\psline(2.56, 1.05)(5.12,0)
\psline(2.56, -1.05)(5.12,0)
\psline(5.12, 0)(3.62,0)
}
\end{pspicture}
\end{center}
\caption{}\label{triangle}
\end{figure}

\newpage

\section{Generic  Bi-CM graphs}
\label{generic}

As we have already seen in the first section, the Alexander dual $J=I^\vee_G$ of the edge ideal of a bi-CM graph $G$ is a Cohen--Macaulay ideal of codimension 2 with linear resolution. The ideal $J$ may have several distinct relation matrices with respect to the unique minimal monomial set of generators of $J$. As shown in \cite{BH1}, one may attach to each of the  relation matrices  of $J$  a tree as follows:  let $u_1,\ldots,u_m$ be the unique minimal set of  generators of $J$. Let $A$ be one of the relation matrices of $J$.  Because $J$ has a linear resolution, the generating relations of $J$ may be chosen  all of the form $x_ku_i-x_lu_j=0$. This implies that in each row of the $(m-1)\times m$-relation matrix $A$ there are exactly two non-zero entries (which are variables with different signs). We call such relations, {\em relations of binomial type}.

\begin{Example}
\label{relation}
Consider the bi-CM graph  $G$ on the vertex set $[5]$ and edges $\{1,2\}$ $\{2,3\}$, $\{3,1\}$, $\{2,4\}$, $\{3,4\}$, $\{4,5\}$ as displayed in Figure~\ref{a}.

\begin{figure}[hbt]
\begin{center}
\psset{unit=0.7cm}
\begin{pspicture}(-2,-3)(3,1)
\rput(-0.8,-1){
\rput(0,0){$\bullet$}
\rput(1.5,1){$\bullet$}
\rput(1.5,-1){$\bullet$}
\rput(3,0){$\bullet$}
\rput(4.5,0){$\bullet$}

\psline(0,0)(1.5,1)
\psline(0,0)(1.5,-1)
\psline(1.5,1)(1.5,-1)
\psline(1.5,1)(3,0)
\psline(1.5,-1)(3,0)
\psline(3,0)(4.5,0)

\rput(-0.5,0){$x_{1}$}
\rput(1.5,1.4){$x_{2}$}
\rput(1.5,-1.4){$x_{3}$}
\rput(3,0.5){$x_{4}$}
\rput(4.5, 0.5){$x_{5}$}
}
\end{pspicture}
\end{center}
\caption{}
\label{a}
\end{figure}

The ideal $J=I_G^\vee$ is generated by $u_1=x_2x_3x_4$, $u_2=x_1x_3x_4$, $u_3=x_2x_3x_5$ and $u_4=x_1x_2x_4$. The relation matrices with respect to $u_1,u_2,u_3$ and $u_4$ are the matrices
\[
A_1=
\begin{pmatrix}
 x_1 & -x_2 & 0 & 0 \\
x_5 & 0 & -x_4 & 0 \\
x_1 & 0 & 0 & -x_3
\end{pmatrix},
\]
and
\[
A_2=
\begin{pmatrix}
 x_1 &-x_2 & 0 & 0 \\
x_5 & 0 &-x_4 & 0 \\
0 & x_2 & 0 & -x_3
\end{pmatrix}.
\]
\end{Example}

\medskip
\noindent
Coming back to the general  case,  one assigns to the relation matrix $A$ the following graph $\Gamma$: the vertex set  of $\Gamma$ is the set $V(\Gamma)=\{1,2,\ldots,m\}$, and $\{i,j\}$ is said to be an edge of $\Gamma$ if and only if  some row of $A$ has  non-zero entries for the $i$th- and $j$th-component.
It is remarked in \cite{BH1} and easy to see that $\Gamma$ is a tree. This tree is in general not uniquely determined by $G$.

\medskip
In our Example~\ref{relation} the relation tree of $A_1$ is

\begin{figure}[hbt]
\begin{center}
\psset{unit=0.8cm}
\begin{pspicture}(-1,-2)(4,1.5)
\rput(-0.8,-1){
\rput(0,0){$\bullet$}
\rput(1.5,0){$\bullet$}
\rput(3,0){$\bullet$}
\rput(1.5,1.4){$\bullet$}

\psline(0,0)(1.5,0)
\psline(1.5,0)(3,0)
\psline(1.5,0)(1.5,1.4)

\rput(0,-0.5){$x_{4}$}
\rput(1.5,-0.5){$x_{1}$}
\rput(3,-0.5){$x_{2}$}
\rput(1.5, 1.9){$x_{3}$}
}
\end{pspicture}
\end{center}
\caption{}\label{b}
\end{figure}

while the relation tree of $A_2$ is

\begin{figure}[hbt]
\begin{center}
\psset{unit=0.8cm}
\begin{pspicture}(0,-2)(3,0)
\rput(-0.8,-1){
\rput(0,0){$\bullet$}
\rput(1.5,0){$\bullet$}
\rput(3,0){$\bullet$}
\rput(4.5,0){$\bullet$}

\psline(0,0)(1.5,0)
\psline(1.5,0)(3,0)
\psline(3,0)(4.5,0)

\rput(0,-0.5){$x_{3}$}
\rput(1.5,-0.5){$x_{1}$}
\rput(3,-0.5){$x_{2}$}
\rput(4.5,-0.5){$x_{4}$}
}
\end{pspicture}
\end{center}
\caption{}\label{c}
\end{figure}

Now let $J$ be any codimension 2 Cohen-Macaulay monomial ideal with linear resolution. Then, as observed in Section~\ref{properties},  $J^\vee=I_G$  where $G$ is a bi-CM graph.  Now we follow Naeem \cite{N} and define for any given tree $T$ on the vertex set $[m]=\{1,\ldots,m\}$ with edges $e_1,\ldots,e_{m-1}$ the $(m-1)\times m$-matrix $A_T$ whose entries $a_{kl}$ are defined as follows: we  assign to the $k$th edge $e_k=\{i,j\}$ of $T$ with $i<j$ the
$k$th row of $A_T$ by setting
\begin{eqnarray}
\label{genericmatrix}
a_{kl}= \left\{\begin{array}{ll} x_{ij},&\text{ if $l =i$,}\\
-x_{ji},&\mbox{ if $l=j$,}\\
0,&\mbox{ otherwise.}
\end{array}\right.
\end{eqnarray}
The matrix $A_T$ is called the {\em generic matrix}  attached to the tree $T$.

\medskip
By the Hilbert-Burch theorem \cite{BH},  the matrix $A_T$ is the relation matrix of the ideal $J_T$  of   maximal minors of $A_T$, and  $J_T$ is a Cohen-Macaulay ideal of codimension $2$ with linear resolution.

We let $G_T$ be the graph such that $I_{G_T}=J^\vee$, and call $G_T$ the {\em generic bi-CM graph} attached to $T$.

Our discussion so far yields

\begin{Proposition}
\label{genericbicm}
For any tree $T$, the graph $G_T$ is bi-CM.
\end{Proposition}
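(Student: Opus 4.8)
The plan is to read the conclusion off from the properties of $J_T$ recorded just above, combined with the Alexander-duality characterization of bi-CM graphs. The key starting observation is that, since $I_{G_T}=J_T^\vee$ and Alexander duality is involutive, i.e. $(J_T^\vee)^\vee=J_T$, we have $J_T=(I_{G_T})^\vee$. In other words, $J_T$ is precisely the vertex cover ideal of $G_T$. Everything then reduces to transferring the three properties of $J_T$ given by Hilbert-Burch across this duality.

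Concretely, I would argue as follows. By the Hilbert-Burch theorem, as already noted, $J_T$ is a Cohen-Macaulay ideal of codimension $2$ with linear resolution. At the end of Section~\ref{properties} it was observed that a graph $G$ is bi-CM if and only if its vertex cover ideal $(I_G)^\vee$ is a codimension $2$ Cohen-Macaulay ideal with linear relations. Applying this with $G=G_T$, and using that $(I_{G_T})^\vee=J_T$ has exactly these properties, we conclude that $G_T$ is bi-CM.

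If one prefers an argument that does not quote that observation, the same conclusion follows by applying the Eagon-Reiner theorem twice. Since $J_T$ is Cohen-Macaulay, its Alexander dual $J_T^\vee=I_{G_T}$ has a linear resolution; and since $J_T$ has a linear resolution, Eagon-Reiner applied to $I_{G_T}$ (whose dual is $J_T$) shows that $I_{G_T}$ is Cohen-Macaulay. Thus $I_{G_T}$ is a Cohen-Macaulay ideal with linear resolution, which by the Eagon-Reiner characterization recalled in Section~\ref{properties} (equivalently, via Proposition~\ref{basic}) means exactly that $G_T$ is bi-CM.

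The deduction itself is immediate, so I do not expect a genuine obstacle in the logical step. The only point requiring care is the implicit claim that $J_T$ is a \emph{squarefree monomial} ideal, so that Alexander duality genuinely applies and $G_T$ is a bona fide graph. This rests on Naeem's computation: because $A_T$ is the signed incidence matrix of a tree, deleting any one column yields a square matrix whose determinant is a single squarefree monomial — the acyclicity of $T$ forces a unique choice of one nonzero entry from each row — so every maximal minor of $A_T$ is a squarefree monomial and $J_T$ is squarefree. Its codimension being $2$ then forces all minimal primes to have height $2$, whence $J_T^\vee=I_{G_T}$ is generated in degree $2$ and is indeed the edge ideal of a graph. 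With these setup facts in hand — all already established in the excerpt — nothing further is needed.
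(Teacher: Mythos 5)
Your proof is correct and is essentially the paper's own argument: the paper records this proposition as an immediate consequence (``our discussion so far yields'') of the Hilbert--Burch theorem giving that $J_T$ is a codimension $2$ Cohen--Macaulay ideal with linear resolution, combined with the observation at the end of Section~\ref{properties} (equivalently, Eagon--Reiner applied twice) that this property of the vertex cover ideal characterizes bi-CM graphs. Your added verification that the maximal minors of $A_T$ are squarefree monomials, so that $J_T^\vee$ really is an edge ideal, is a worthwhile point the paper leaves implicit (citing Naeem), but it does not change the route.
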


In order to describe the vertices and edges  of $G_T$, let $i$ and $j$ be any two vertices of the tree $T$. There exists a unique path $P: i=i_0,i_1,\ldots,i_r=j$ from $i$ to $j$. We set $b(i,j)=i_1$ and call $b(i,j)$ the {\em begin} of $P$, and set $e(i,j)=i_{r-1}$ and call $e(i,j)$ the {\em end} of $P$.

It follows from  \cite[Proposition 1.4]{N} that $I_{G_T}$ is generated by the monomials $x_{ib(i,j)}x_{je(i,j)}$. Thus the vertex set  of the graph $G_T$ is given as
\[
V(G_T)=\{(i,j), (j,i)\: \text{$\{i,j\}$ is an edge of $T$}\}.
\]
In particular, $\{(i,k), (j,l)\}$ is an edge of $G_T$ if and only if there exists a path $P$ from $i$ to $j$ such that  $k=b(i,j)$ and $l=e(i,j)$.

\medskip
In Example~\ref{relation}, let $T_1$ and $T_2$ be the relation trees of $A_1$ and $A_2$, respectively. Then the generic matrices corresponding to these trees are
\medskip
\[
B_1=
\begin{pmatrix}
 x_{12} & -x_{21} & 0 & 0 \\
x_{13} & 0 & -x_{31} & 0 \\
x_{14} & 0 & 0 & -x_{41}
\end{pmatrix},
\]
and
\[
B_2=
\begin{pmatrix}
 x_{12} &-x_{21} & 0 & 0 \\
x_{13} & 0 &-x_{31} & 0 \\
0 & x_{24} & 0 & -x_{42}
\end{pmatrix}.
\]

\medskip
The generic graphs corresponding to the trees $T_1$ and $T_2$ are displayed in Figure~\ref{d}.
\begin{figure}[hbt]
\begin{center}
\psset{unit=0.7cm}
\begin{pspicture}(-.2,-3)(2,2)
\rput(-0.8,-1){
\rput(0,0){$\bullet$}
\rput(1.5, 0){$\bullet$}
\rput(.75,1.3){$\bullet$}
\rput(.75,2.6){$\bullet$}
\rput(-1.1,-.7){$\bullet$}
\rput(2.6,-.7){$\bullet$}

\rput(.75,-2.5){$G_{T_1}$}

\psline(0,0)(1.5, 0)
\psline(0,0)(.75,1.3)
\psline(.75, 1.3)(1.5, 0)
\psline(.75,1.3)(.75,2.6)

\psline(0,0)(-1.1,-.7)
\psline(1.5,0)(2.6, -.7)

\rput(-1.2, -1.2){$x_{41}$}
\rput(-0.6,.2){$x_{14}$}
\rput(2.1,.1){$x_{31}$}
\rput(2.7,-1.2) {$x_{13}$}
\rput(1.35, 1.3){$x_{21}$}
\rput(1.35,2.6){$x_{12}$}
}
\end{pspicture}
\psset{unit=0.7cm}
\begin{pspicture}(-5,-3)(2,2)
\rput(-0.8,-1){
\rput(.37, .66){$\bullet$}
\rput(1.87, .66){$\bullet$}
\rput(1.12,1.96){$\bullet$}
\rput(1.12, -.66){$\bullet$}
\rput(3.37, .66){$\bullet$}

\rput(1.12,-2.5){$G_{T_2}$}

\psline(.37, .66)(1.87, .66)
\psline(.37,.66)(1.12,1.96)
\psline(.37,.66)(1.12, -.66)
\psline(1.12, 1.96)(1.87, .66)
\psline(1.12, -.66)(1.87, .66)
\psline(1.87,.66)(3.37, .66)
\rput(-.23, .66){$x_{12}$}
\rput(1.12,2.46){$x_{21}$}
\rput(1.12, -1.16){$x_{42}$}
\rput(2.1, 1.16){$x_{31}$}
\rput(3.95, .66){$x_{13}$}
}
\end{pspicture}
\end{center}
\caption{}\label{d}
\end{figure}

\medskip
It follows from this description that  $G_T$ has $2(m-1)$ vertices. Since $G_T$ is bi-CM, the number of edges of $G_T$ is ${n-c+1\choose 2}$, see Corollary~\ref{edges}.  Here $n-c$ is the degree of the generators of $I_G^\vee$ which is $m-1$. Hence $G_T$ has ${m\choose 2}$ edges. Among the edges of $G_T$ are in particular the $m-1$ edges $\{(i,j),(j,i)\}$ where $\{i,j\}$ is an edge of $T$.

\begin{Proposition}
\label{pairwise}
Let $A$ be the relation matrix of a codimension $2$ Cohen-Macaulay monomial ideal $J$ with linear resolution, and assume that all the variables appearing in $A$ are pairwise distinct. Let $T$ be the relation tree of $A$.  Then $J$ is isomorphic to $J_T$ and $J$ admits the unique relation tree, namely $T$ .
\end{Proposition}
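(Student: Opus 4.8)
The plan is to treat the two assertions separately: first I would deduce $J\iso J_T$ by a relabelling of variables, and then use the resulting explicit monomial description of the generators to pin down the relation tree.

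For the isomorphism $J\iso J_T$, I would argue as follows. Since $T$ is a tree on $[m]$ it has exactly $m-1$ edges, so the $m-1$ rows of $A$ are in bijection with the edges of $T$, the $k$th row corresponding to the edge $e_k=\{i,j\}$ with $i<j$ and having its two nonzero entries (one variable with a positive and one with a negative sign) in columns $i$ and $j$. As all $2(m-1)$ variables occurring in $A$ are pairwise distinct, the assignment sending the entry of $A$ in row $k$, column $i$ to $x_{ij}$ and the entry in row $k$, column $j$ to $x_{ji}$ extends to a $K$-algebra isomorphism $\phi$ between the polynomial ring on the variables of $A$ and the polynomial ring on the variables of $A_T$. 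By construction $\phi$ carries $A$ to $A_T$, hence carries each maximal minor of $A$ to the corresponding maximal minor of $A_T$; by the Hilbert--Burch theorem these minors minimally generate $J$ and $J_T$, so $\phi(J)=J_T$ and $J\iso J_T$. Since $\phi$ merely renames variables it identifies the relation trees of $J$ with those of $J_T$, and it therefore suffices to prove that $J_T$ has $T$ as its only relation tree.

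For the uniqueness, let $u_1,\dots,u_m$ be the minimal monomial generators of $J_T$; they all have degree $d=m-1$. Every row of a (binomial) relation matrix of $J_T$ is a relation $x_ku_p-x_lu_q=0$, and equality of the monomials $x_ku_p=x_lu_q$ forces $\lcm(u_p,u_q)$ to have degree $d+1$, i.e.\ $u_p$ and $u_q$ differ in exactly one variable. Thus every edge of any relation tree of $J_T$ belongs to the graph $\Gamma$ on vertex set $[m]$ whose edges are the pairs $\{p,q\}$ with $\deg\lcm(u_p,u_q)=d+1$. Since the $m-1$ edges of $T$ occur among the relations coming from $A_T$ (row $e_k=\{i,j\}$ gives $x_{ij}u_i-x_{ji}u_j=0$), we have $E(T)\subseteq E(\Gamma)$; once the reverse inclusion is shown, every relation tree will be a tree on $[m]$ all of whose $m-1$ edges lie in the $(m-1)$-element set $E(T)$, and hence will equal $T$.

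The heart of the argument is therefore to compute $\Gamma$. Deleting the $i$th column of $A_T$ and rooting $T$ at $i$, the determinant is a single monomial, because $T$ is a tree and only the term matching each edge to its endpoint farther from $i$ survives; one finds
\[
u_i=\prod_{\{s,t\}\in E(T)} x_{cp},
\]
where for the edge $\{s,t\}$ the letter $p$ denotes the endpoint nearer to $i$ and $c$ the endpoint farther from $i$. Comparing $u_i$ and $u_j$ edge by edge, the variable contributed by an edge $\{s,t\}$ agrees for both exactly when $i$ and $j$ lie on the same side of $\{s,t\}$ in $T$, that is, exactly when $\{s,t\}$ does not lie on the unique path from $i$ to $j$; since all variables are distinct, this yields $\deg\lcm(u_i,u_j)=d+d_T(i,j)$, where $d_T(i,j)$ is the length of that path. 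Hence $\{i,j\}\in E(\Gamma)$ iff $d_T(i,j)=1$ iff $\{i,j\}\in E(T)$, so $\Gamma=T$ and the uniqueness follows. I expect the main obstacle to be precisely this last step: producing the explicit monomial form of the maximal minors and translating ``differs in one variable'' into ``adjacent in $T$''; the passage $J\iso J_T$ and the final counting are routine bookkeeping.
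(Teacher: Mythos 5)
Your proof is correct, and while its first half (renaming the two variables in each row of $A$ to $x_{ij}$ and $x_{ji}$, modulo harmless sign changes of rows, so that $A$ becomes $A_T$ and the maximal minors match up, giving $J\iso J_T$ by Hilbert--Burch) is essentially the paper's argument, your uniqueness argument takes a genuinely different route. The paper argues homologically: the multigraded shifts in the minimal free resolution of $J$ are invariants, so any other relation matrix arises from $A$ by row operations among rows of the \emph{same} multidegree; since all variables of $A$ are pairwise distinct, two binomial-type rows of equal multidegree would have to agree up to sign column by column, so each multidegree class consists of a single row, and hence the binomial-type relation matrix --- and a fortiori its tree --- is unique up to the order of the rows. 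You instead work combinatorially on the $J_T$ side: rooting $T$ at $i$ you show the minor $u_i$ is the single monomial picking, for each edge, the variable labelled by the endpoint farther from $i$, deduce the formula $\deg\lcm(u_i,u_j)=d+d_T(i,j)$, note that any binomial syzygy $x_ku_p=x_lu_q$ between distinct degree-$d$ generators forces $\deg\lcm(u_p,u_q)=d+1$, and conclude that every edge of any relation tree lies in $E(T)$; counting ($m-1$ edges on both sides) then forces equality. Both arguments are sound. The paper's route is shorter and yields the stronger conclusion that the entire binomial-type relation matrix is unique up to row order, but it leans on uniqueness of minimal multigraded resolutions; yours is more elementary and self-contained, at the cost of essentially reproving the explicit description of the minors that the paper only cites later (Naeem's Proposition 1.2, invoked in the proof of the main theorem), and it yields the pleasant quantitative byproduct that $\deg\lcm(u_i,u_j)$ measures the tree distance $d_T(i,j)$ exactly, of which the uniqueness of the relation tree is the case $d_T(i,j)=1$.
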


\begin{proof}
Since all  variables  appearing in $A$ are pairwise distinct, we may rename the variables  appearing in a binomial type relation  and call them  as in the generic matrix $x_{ij}$ and $x_{ji}$. Then $A$ becomes $A_T$ and this  shows that $J\iso J_T$.

To prove the uniqueness of the relation tree, we first notice that the  shifts in the multigraded free resolution of $J$ are uniquely determined and independent of the particular choice of the relation matrix  $A$. A possibly different relation matrix $A'$ can arise from $A$ only be row operations with rows of the same multidegree. Let $r_1,\ldots,r_l$ by rows of $A$ with the same multidegree corresponding to binomial type relations,  and fix a column $j$. Then the non-zero $j$th columns of each of the $r_i$ must be the same, up to a sign. Since we assume that the variables appearing in $A$ are pairwise distinct, it follows that $l=1$. In particular, there is, up to the order of the rows,  only  one relation matrix with rows corresponding to binomial type relations. This shows that $T$ is uniquely determined.
\end{proof}

\section{Inseparable models of Bi-CM graphs}
\label{main}

In order to state the main result of this paper we recall the concept of inseparability introduced by Fl{\o}ystad et al in \cite{FGH}, see also \cite{L}.

Let $S=K[x_1,\ldots,x_n]$ be the polynomial ring over the field $K$ and  $I\subset S$  a squarefree  monomial ideal minimally generated by the monomials $u_1,\ldots, u_m$. Let $y$ be an indeterminate over $S$. A monomial  ideal $J\subset S[y]$ is called a {\em separation} of $I$ for the variable $x_i$ if  the following holds:
\begin{enumerate}
\item[(i)] the ideal $I$ is the image of $J$ under  the $K$-algebra homomorphism $S[y]\to S$ with $y\mapsto x_i$ and $x_j\mapsto x_j $ for all $j$;

\item[(ii)] $x_i$ as well as $y$ divide some minimal generator of $J$;

\item[(iii)] $y-x_i$ is a non-zero divisor of $S[y]/J$.
\end{enumerate}
The ideal $I$ is called {\em separable}  if it admits a separation, otherwise {\em inseparable}.  If $J$ is an  ideal which is obtained from $I$ by a finite number of separation steps, then we say that $J$ {\em specializes} to $I$. If moreover, $J$ is inseparable, then $J$ is called an {\em inseparable model} of $I$.  Each monomial ideal admits an inseparable model, but in general not only one. For example, the separable models of the powers of the graded maximal ideal of $S$ have been considered by Lohne \cite{L}.

\medskip
Forming the Alexander dual behaves well with respect to specialization and separation.

\begin{Proposition}
\label{alexander}
Let $I\subset S$ be a squarefree  monomial ideal. Then the following holds:
\begin{enumerate}
\item[{\em (a)}] If $J$ specializes to $I$, then $J^\vee$ specializes to $I^\vee$.

\item[{\em (b)}] The ideal $I$ is separable if and only $I^\vee$ is separable.
\end{enumerate}
\end{Proposition}

\begin{proof}
(a) It follows from \cite[Proposition 7.2]{FGH} that if $L\subset S[y]$ is a monomial ideal such that $y-x_i$ is a regular element on $S[y]/L$ with  $(S[y]/L)/(y-x_i)(S[y]/L)\iso S/I$, then $y-x_i$ is a regular element on $S[y]/L^\vee$ and $(S[y]/L^\vee)/(y-x_i)(S[y]/L^\vee)\iso S/I^\vee$. Repeated applications of this fact  yields the desired result.

(b) We may assume  that the ideal $L$  as in (a) is a separation of $I$ with respect to $x_i$. Since (a) holds,  it remains to show that  $y$ as well as $x_i$ divides some generator of $L^\vee$. By assumption this is the case for $L$.  Suppose that $y$ does not divide any generator of $L^\vee$. Then it follows from the definition of the Alexander dual that $y$ also does not divide  any generator of $(L^\vee)^\vee$. This is a contradiction, since $L=(L^\vee)^\vee$. Similarly it follows that $x_i$ divides some generator of  $L^\vee$.
\end{proof}

\medskip
We now apply these concepts to edge ideals. Let $G$ be a  graph on the vertex set $[n]$. We call $G$ {\em separable} if $I_G$ is separable, and otherwise {\em inseparable}. Let $J$ be a separation of $I_G$ for the variable $x_i$. Then by the definition of separation,  $J$  is again an edge ideal, say $J=I_{G'}$ where $G'$ is a graph with one more vertex than $G$. The graph  $G$ is obtained from $G'$ by identifying this new vertex with the vertex $i$ of $G$.
Algebraically, this identification amounts to say that $S/I_G\iso (S'/I_{G'})/(y-x_i)(S'/I_{G'})$, where $S'=S[y]$ and $y-x_i$ is a non-zerodivisor of $S'/I_{G'}$. In particular, it follows that  $I_G$ and $I_{G'}$ have the same graded Betti-numbers. In other words, all important homological  invariants of $I_G$ and $I_{G'}$ are the same. It is therefore of interest to classify all inseparable graphs. An attempt for this classification is given in \cite{ABHL}.

\begin{Example}
\label{separable}
Let  $G$ be the triangle and $G'$ be the line graph displayed in Figure~\ref{triangle}.

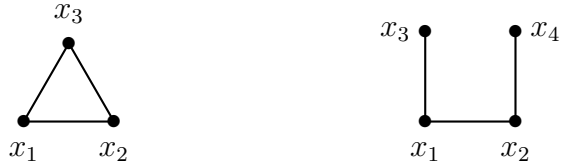
\begin{figure}[hbt]
\begin{center}
\psset{unit=0.8cm}
\begin{pspicture}(-0.2,-2)(2,1)
\rput(-0.8,-1){
\rput(0,0){$\bullet$}
\rput(1.5,0){$\bullet$}
\rput(.75,1.3){$\bullet$}

\psline(0,0)(.75,1.3)
\psline(0,0)(1.5,0)
\psline(.75,1.3)(1.5,0)

\rput(0,-0.5){$x_{1}$}
\rput(.75,1.8){$x_{3}$}
\rput(1.5,-0.5){$x_{2}$}
}
\end{pspicture}
\psset{unit=0.8cm}
\begin{pspicture}(-4.5,-2)(2,1)
\rput(-0.8,-1){
\rput(0,0){$\bullet$}
\rput(0,1.5){$\bullet$}
\rput(1.5,0){$\bullet$}
\rput(1.5,1.5){$\bullet$}

\psline(0,0)(0,1.5)
\psline(0,0)(1.5,0)
\psline(1.5,0)(1.5,1.5)

\rput(0,-0.5){$x_{1}$}
\rput(-0.5,1.5){$x_{3}$}
\rput(1.5,-0.5){$x_{2}$}
\rput(2,1.5){$x_{4}$}
}
\end{pspicture}
\end{center}
\caption{A triangle and its inseparable model}\label{triangle}
\end{figure}

Then  $I_{G'}=(x_1x_2, x_1x_3, x_2x_4).$ Since $\Ass(I_{G'})=\{ (x_1, x_2), (x_1, x_4), (x_2, x_3)\}$, it follows that $x_3-x_4$ is a non-zero divisor on $S'/I_{G'}$ where $S'=K[x_1, x_2, x_3,x_4]$.  Moreover,  $(S'/I_{G'})/(x_3-x_4)(S'/I_{G'})\iso S/I_G$. Therefore, the triangle in Figure~\ref{triangle} is obtained as a specialization from the line graph in Figure~\ref{triangle} by identifying the vertices $x_3$ and $x_4$.

\end{Example}
\medskip
\noindent

We denote by $G^{(i)}$ the complementary graph of the restriction $G_{N(i)}$ of $G$ to $N(i)$ where $N(i)=\{j\:\; \{j,i\}\in E(G)\}$ is the neighborhood of $i$. In other words, $V(G^{(i)})=N(i)$ and $E(G^{(i)})=\{\{j,k\}\:\; j,k\in N(i) \text{ and } \{j,k\}\not\in E(G)\}$.  Note that $G^{(i)}$ is disconnected if and only if  $N(i)=A\cup B$, where $A,B\neq \emptyset$, $A\sect B=\emptyset$ and all  vertices of $A$ are adjacent to those of $B$.

Here we will need the following result of \cite[Theorem 3.1]{ABHL}.

\begin{Theorem}
\label{combinatorially}
 The following conditions are equivalent:
\begin{enumerate}
\item[{\em (a)}] The  graph $G$ is inseparable;

\item[{\em (b)}] $G^{(i)}$ is connected for all $i$.
\end{enumerate}
\end{Theorem}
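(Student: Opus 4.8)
The plan is to reduce the statement to a vertex-by-vertex criterion and then translate the non-zerodivisor condition (iii) in the definition of separation into the disconnectedness of $G^{(i)}$. Since $G$ is separable exactly when $I_G$ admits a separation for some variable $x_i$, and since---as recalled just before the theorem---every separation of an edge ideal for $x_i$ is again an edge ideal $I_{G'}$ obtained by splitting $i$ into two non-adjacent vertices $i$ and $i'$ (with $y=x_{i'}$), it suffices to prove that \emph{$I_G$ is separable for $x_i$ if and only if $G^{(i)}$ is disconnected}; the theorem then follows by ranging over all $i$. Writing $A=N_{G'}(i)$ and $B=N_{G'}(i')$, such a $G'$ satisfies $A\union B=N(i)$ with $A,B\neq\emptyset$, $\{i,i'\}\notin E(G')$, and all edges of $G$ avoiding $i$ are kept. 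Because $I_{G'}$ is squarefree, hence radical, $\Ass(S'/I_{G'})=\Min(S'/I_{G'})=\{P_C:C\text{ a minimal vertex cover of }G'\}$, and the linear form $y-x_i$ lies in $P_C$ precisely when $i,i'\in C$. Thus condition (iii) is equivalent to: \emph{no minimal vertex cover of $G'$ contains both $i$ and $i'$}, i.e.\ every maximal independent set of $G'$ meets $\{i,i'\}$.

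For the implication from disconnectedness to separability, suppose $G^{(i)}$ is disconnected. By the remark preceding the theorem I may write $N(i)=A\union B$ with $A,B\neq\emptyset$, $A\sect B=\emptyset$, and every vertex of $A$ adjacent in $G$ to every vertex of $B$. I construct $G'$ by splitting $i$ so that $N_{G'}(i)=A$ and $N_{G'}(i')=B$; then identifying $i'$ with $i$ returns $G$, and (i),(ii) hold. For (iii) I argue by contradiction using the cover reformulation: a minimal vertex cover $C$ with $i,i'\in C$ would, by minimality, admit a neighbor $a\in A\setminus C$ of $i$ and a neighbor $b\in B\setminus C$ of $i'$; but $\{a,b\}\in E(G')$ by the complete-bipartite hypothesis, so this edge is uncovered, a contradiction. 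Hence $G'$ is a genuine separation of $I_G$ for $x_i$.

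For the converse, let $G'$ be a separation at $i$ with $A,B$ as above. If $a\in A$, $b\in B$, $a\neq b$ and $\{a,b\}\notin E(G)$, then extending $\{a,b\}$ to a maximal independent set of $G'$ yields (since $a$ is adjacent to $i$ and $b$ to $i'$) one avoiding both $i$ and $i'$; its complement is a minimal vertex cover containing $i$ and $i'$, contradicting (iii). Therefore every non-edge of $G$ inside $N(i)$ has both endpoints in $A\setminus B$ or both in $B\setminus A$, while every vertex of $A\sect B$ is adjacent in $G$ to all other vertices of $N(i)$, hence isolated in $G^{(i)}$. A short case analysis---$A\sect B\neq\emptyset$ (an isolated vertex of $G^{(i)}$ together with a second vertex), or $A\sect B=\emptyset$ (so $N(i)=A\union B$ disjoint with no $G^{(i)}$-edges across)---shows $G^{(i)}$ has at least two components; the only exception $A=B=\{j\}$ is ruled out because then the maximal independent sets containing $j$ give covers with both $i,i'$, violating (iii). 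Thus $G^{(i)}$ is disconnected.

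The step I expect to be the main obstacle is the forward direction when $A$ and $B$ overlap: condition (iii) does not force $A\sect B=\emptyset$, so one must verify that each shared neighbor is either an isolated vertex of $G^{(i)}$---still witnessing disconnectedness---or, in the single-shared-neighbor degeneracy, directly incompatible with (iii). Setting up the associated-prime translation of (iii) and pushing this case analysis through cleanly is where the real content lies; the two explicit constructions are then routine.
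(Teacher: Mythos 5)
The paper gives no proof of this theorem: it is quoted verbatim from \cite[Theorem 3.1]{ABHL} precisely so that it can be used as a black box in the proof of Theorem~\ref{maintheorem}(a), so there is no internal argument to compare yours against. Judged on its own merits, your proof is correct and self-contained. The reduction to the single-vertex statement (``$I_G$ admits a separation for $x_i$ if and only if $G^{(i)}$ is disconnected'') is legitimate, since the paper records just before the theorem that any separation of an edge ideal is again an edge ideal $I_{G'}$, and condition (i) forces $\{i,i'\}\notin E(G')$ because $I_G$ is squarefree. Your translation of condition (iii) is the key move and is exactly right: $I_{G'}$ is radical, so $\Ass(S'/I_{G'})=\Min(S'/I_{G'})=\{P_C\: C \text{ a minimal vertex cover of } G'\}$, and the linear form $y-x_i$ lies in the monomial prime $P_C$ if and only if both variables do; hence (iii) says no minimal vertex cover of $G'$ contains both $i$ and $i'$. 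Both implications then check out: in the construction from a disconnection $N(i)=A\cup B$, the minimality argument producing $a\in A\setminus C$ and $b\in B\setminus C$ with $\{a,b\}\in E(G')$ (using $A\cap B=\emptyset$ to get $a\neq b$) is sound; and in the converse you correctly refrain from assuming $N_{G'}(i)\cap N_{G'}(i')=\emptyset$, show every $G^{(i)}$-edge has both endpoints in $A\setminus B$ or both in $B\setminus A$, note that vertices of $A\cap B$ are isolated in $G^{(i)}$, and eliminate the one degenerate case $A=B=\{j\}$ directly against (iii) via a maximal independent set containing $j$. This overlap case is indeed the point where a careless argument would fail, and you handle it completely; the only stylistic remark is that your cover-theoretic reformulation of (iii) could be stated once as a lemma, since both directions invoke it.
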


Now we are ready to state our main result.

\begin{Theorem}
\label{maintheorem}
{\em (a)} Let $T$ be a tree. Then $G_T$ is an  inseparable bi-CM graph.

{\em (b)} For any inseparable bi-CM  graph $G$, there exists a unique tree $T$ such that $G\iso G_T$.

{\em (c)} Let $G$ be any bi-CM graph. Then there exists a  tree $T$ such that $G_T$ is an inseparable model of $G$.
\end{Theorem}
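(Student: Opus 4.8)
The plan is to establish the three parts in the order (a), (c), (b), since (b) will drop out formally once (a) and (c) are in hand. Throughout I would work on the Alexander-dual side: write $J=(I_G)^\vee$ and recall $I_{G_T}=J_T^\vee$, so that by Proposition~\ref{alexander} every statement about separations of edge ideals translates into the corresponding statement about the codimension-$2$ Cohen--Macaulay ideals $J$ and $J_T$, which are ideals of maximal minors of Hilbert--Burch matrices and hence far more rigid. For (a), Proposition~\ref{genericbicm} already gives that $G_T$ is bi-CM, so by Proposition~\ref{alexander}(b) it suffices to show $J_T$ is inseparable. Suppose $L\subset S[y]$ were a separation of $J_T$ for a variable $v$. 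Separation preserves the graded Betti numbers and the Cohen--Macaulay property, so $L$ is again a codimension-$2$ Cohen--Macaulay monomial ideal with linear resolution; by Hilbert--Burch $L=I_{m-1}(A')$ for an $(m-1)\times m$ matrix $A'$ with binomial-type rows. Specializing $y\mapsto v$ sends $A'$ to a relation matrix of $J_T$, and since every variable occurs exactly once in the generic matrix $A_T$, Proposition~\ref{pairwise} forces $A'$ to be $A_T$ with its single occurrence of $v$ either left alone or replaced by $y$. In the first case $y$ occurs in no entry of $A'$, in the second $v$ does; as the minimal generators of $L$ are the monomial maximal minors of $A'$, a variable absent from $A'$ divides none of them, contradicting axiom (ii). Hence $J_T$, and therefore $G_T$, is inseparable.

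For (c) I would pick a relation matrix $A$ of $J=(I_G)^\vee$ with binomial rows and let $T$ be its relation tree; then $A$ is the specialization of $A_T$ obtained by renaming the pairwise distinct generic variables to the (possibly repeated) variables actually occurring in $A$. I factor this specialization into elementary steps, each merging one pair of entries that carry a common variable, and claim that each step, read upwards, is a separation. Iterating then shows that $J_T$ is obtained from $J$ by separations, so by (a) $J_T$ is an inseparable model of $J$, and Proposition~\ref{alexander}(a) transports this to $I_{G_T}=J_T^\vee$ being an inseparable model of $I_G$. The task is thus to verify the separation axioms at one un-merging step passing from $J'$ to $J''$, where $A''$ arises from $A'$ by giving a new name $y$ to one of two entries previously equal to $z$. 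Axiom (i) holds because $A''|_{y=z}=A'$, so the generators of $J''$ map to those of $J'$; axiom (ii) holds because each entry of a tree-type Hilbert--Burch matrix occurs in at least one maximal minor (root the tree at the opposite endpoint of that entry's edge), so both $y$ and $z$ still divide a generator of $J''$.

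The main obstacle is axiom (iii), the non-zerodivisor condition, and the key realization is that it can be read off from a dimension count rather than from an explicit computation of the minimal primes. Every intermediate matrix $A''$ is simultaneously a specialization of $A_T$ and specializes to $A$; since the codimension of an ideal of maximal minors of an $(m-1)\times m$ matrix is at most $2$, cannot increase under specialization, and equals $2$ at both ends, it equals $2$ throughout. Thus each $J''=I_{m-1}(A'')$ is a perfect codimension-$2$ ideal, hence Cohen--Macaulay, and, having linear entries, is resolved by its linear Eagon--Northcott complex \cite{BV},\cite{E}. Now $S''/J''$ is Cohen--Macaulay, hence equidimensional, while $(S''/J'')/(y-z)\cong S'/J'$ has dimension $\dim S'-2=\dim(S''/J'')-1$. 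In a Cohen--Macaulay ring an element lying in some minimal prime causes no drop in dimension (the quotient surjects onto $R/P$ with $\dim R/P=\dim R$), so an element that does lower the dimension by one must avoid every minimal prime and is therefore a non-zerodivisor. Applied to $y-z$ this yields axiom (iii) at no extra cost, completing (c).

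Finally (b) is a formal consequence of (a) and (c). Given an inseparable bi-CM graph $G$, part (c) produces a tree $T$ with $G_T$ an inseparable model of $G$; but $G$ admits no separation, so the chain of separations leading from $G$ to $G_T$ has length zero and $G\cong G_T$, giving existence. For uniqueness, if $G\cong G_T\cong G_{T'}$ then $J_T\cong J_{T'}$; since $(I_G)^\vee\cong J_T$ has the generic matrix $A_T$ with pairwise distinct variables as a relation matrix, Proposition~\ref{pairwise} shows its relation tree is unique, so $T\cong T'$. In fact $T$ is recovered intrinsically as the unique relation tree of $(I_G)^\vee$.
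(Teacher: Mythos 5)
Your proposal is correct, and while your treatment of (b) and (c) essentially retraces the paper's argument, your proof of (a) takes a genuinely different route. The paper proves inseparability of $G_T$ combinatorially, via the criterion of \cite{ABHL} quoted as Theorem~\ref{combinatorially}: for each vertex $(i,j)$ of $G_T$ one has $(j,i)\in N((i,j))$, and since $T$ has no cycles the vertex $(j,i)$ is adjacent to no other vertex of $N((i,j))$, so no decomposition $N((i,j))=A\cup B$ with every vertex of $A$ adjacent to every vertex of $B$ can exist. You instead argue entirely on the Alexander-dual side: a separation $L$ of $J_T$ is again a codimension-$2$ Cohen--Macaulay monomial ideal with linear resolution (separation preserves graded Betti numbers and depth), hence admits a Hilbert--Burch matrix with binomial-type rows that specializes under $y\mapsto v$ to a binomial-type relation matrix of $J_T$; the uniqueness statement of Proposition~\ref{pairwise} pins that specialization down to $A_T$, so $A'$ is $A_T$ with at most the single occurrence of $v$ renamed to $y$, whence one of $y,v$ appears in no entry and divides no minimal generator, violating axiom (ii) of separation. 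This buys a uniform treatment in which all three parts flow from the rigidity of the generic matrix, and the external graph-theoretic criterion from \cite{ABHL} is not needed at all; the price is having to invoke explicitly that monomial ideals with linear resolution have binomial-type relation matrices (a fact from \cite{BH1} the paper already uses in Section~\ref{generic}) and that separations preserve the relevant homological data. In (c), your verification of the non-zerodivisor axiom (iii) --- the height of the ideal of maximal minors of an $(m-1)\times m$ matrix is at most $2$, cannot increase under specialization by a linear form, hence equals $2$ at every intermediate stage, so each intermediate quotient is Cohen--Macaulay and a linear form that drops the dimension by one avoids all associated primes --- is a clean stepwise repackaging of the paper's argument, which instead deduces regularity of the whole sequence $\ell$ on $S'/J_TS'$ from a single dimension count at the two ends and then permutes the regular sequence of linear forms; both rest on the same Cohen--Macaulay dimension fact, and your axiom (ii) verification (every entry of a tree-type matrix divides some maximal minor) matches the paper's appeal to \cite[Proposition 1.2]{N}. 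Finally, your deduction of (b) from (a) and (c) --- inseparability of $G$ forces the separation chain to have length zero, so $G\iso G_T$, with uniqueness of $T$ again from Proposition~\ref{pairwise} --- inverts the paper's logical order, where (b) is proved directly by showing the entries of any relation matrix of $I_G^\vee$ are pairwise distinct and (c) is then read off from that proof; the mathematical content is the same.
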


\begin{proof} (a) By Corollary~\ref{genericbicm}, $G_T$ is a bi-CM graph.  In order to see that $G_T$ is inseparable we apply the criterion given in Theorem~\ref{combinatorially},  and thus we have to prove that for each vertex  $(i,j)$ of $G_T$ and for each disjoint union  $N((i,j)) =A\union B$ of the neighborhood of $(i,j)$ for which  $A\neq \emptyset \neq B$, not all   vertices of $A$ are adjacent to those of $B$.

As follows from the discussion  in Section~\ref{generic},
\[
N((i,j)) =\{(k,l)\: \text{there exists a path from $i$  to $l$, and $j=b(i,l)$ and $k=e(i,l)$}\}.
\]
In particular, $(j,i)\in N((i,j))$. Let  $N((i,j)) =A\union B$, as above. We may assume that $(j,i)\in A$. Since $T$ is a tree, then there is no path from $j$ to any  $l$ with $(k,l)\in N((i,j))$, because otherwise we would have a loop in $T$.  This shows that $(j,i)$ is connected to no vertex in $B$, as desired.

(b) Let $A$ be a relation matrix of $J=I_G^\vee$ and $T$ the relation tree of $A$. The non-zero entries  of $A$ are variables with sign $\pm 1$. Say the $k$th row of $A$ has the non-zero entries $a_{ki_k}$ and $a_{kj_k}$ with $i_k<j_k$.  We may assume that the variable representing $a_{ki_k}$ has a positive sign while that $a_{kj_k}$ has a negative sign, and that this is so for each row.  We claim that the variables appearing in the non-zero entries of $A$ are pairwise distinct. By Proposition~\ref{pairwise} this then implies that $T$ is the only relation tree of $J$ and that $G\iso G_T$.

In order to prove the claim, we consider the generic matrix $A_T$  corresponding to $T$. Let $S'$ be the polynomial ring over $S$ in the variables $x_{ij}$ and $x_{ji}$ with $\{i,j\}\in E(T)$. For each $k$ we consider the linear forms $\ell_{k1}=x_{i_kj_k}- a_{ki_k}$ and $\ell_{k2} =x_{j_ki_k}- a_{kj_k}$.
For example, for the matrix $A_2$ in Example~\ref{relation} the linear forms are
$\ell_{11}=x_{12}-x_1$, $\ell_{12}=x_{21}-x_2$, $\ell_{21}=x_{13}-x_5$, $\ell_{22}=x_{31}-x_4$, $\ell_{31}=x_{24}-x_2$ and $\ell_{32}=x_{42}-x_3$.

We let $\ell$ be the sequence of linear form $\ell_{11},\ell_{12},\ldots, \ell_{m-1,1},\ell_{m-1,2}$ in $S'$. Then $(S'/J_TS')/(\ell)(S'/J_TS')\iso S/J$. Since both ideals, $J$ as well as $J_T$,  are Cohen-Macaulay ideals of codimension $2$, it follows that  $\ell$ is a regular sequence on $S'/J_TS'$. Thus, assuming  the variables appearing in the non-zero entries of $A$ are not all pairwise distinct, we see that $J$ is separable.
Indeed, suppose that the variable $x_k$ appears at least twice in the matrix. Then we replace only one of the $x_k$ by the corresponding generic variable $x_{ij}$ to obtain the matrix $A'$. Let $J'$ be the ideal  of maximal minors of $A'$. It follows from the above discussions that $x_{ij}-x_k$ is a regular element of $S[x_{ij}]/J'$. In order to see that $J'$ is a separation of $J$ it remains to be shown that $x_{ij}$  as well as $x_k$ appear as factors of generators of $J'$.  Note that $J'$ is a specialization of $J_T$. The minors of $A_T$ which are the generators of $J_T$ are the monomials $\prod_{i=1\atop i\neq j}^{m+1}x_{ib(i,j)}$ for $j=1,\ldots,m+1$, see \cite[Proposition 1.2]{N}. From this description of the generators of $J_T$  it follows that all entries of $A_T$ appear as factors of  generators of $J_T$. Since $J'$ is a specialization of $J_T$, the same holds true for $J'$, and since $x_{ij}$ as well as $x_k$ are entries of $A'$, the desired conclusion follows.

Now since we know that $J$ is separable, Proposition~\ref{alexander}(b) implies  that $G$ is separable as well. This is  a contradiction.

(c) Let $A$ be a relation matrix of $J=I_G^\vee$ and $T$  the corresponding relation tree. As shown in the proof of part (b), $J_T$ specializes to $J$, and hence $I_{G_T}$ specializes to $I_G$, by Proposition~\ref{alexander}(a). By part (a), the graph   $G_T$ is inseparable. Thus  we conclude that $G_T$ is an inseparable model of  $G$, as desired.
\end{proof}

\end{document}